\renewcommand{\subsection}{\@startsection{subsection}{2}{0mm}{-2mm}{-2mm}{\bf\normalsize}}
\newtheorem{formula}{}[section]
\newtheorem{definition}[formula]{Definition}
\newtheorem{corollary}[formula]{Corollary}
\newtheorem{remark}[formula]{Remark}
\newtheorem{lemma}[formula]{Lemma}
\newtheorem{theorem}[formula]{Theorem}
\newtheorem{prop}{Proposition}[section]
\theoremstyle{definition}
\theoremstyle{remark}
\def\thrm{\begin{theorem}}
\def\thrml#1{\begin{theorem}\label{#1}}
\def\ethrm{\end{theorem}}
\def\rmrk{\begin{remark}}
\def\rmrkl#1{\begin{remark}\label{#1}}
\def\ermrk{\end{remark}}
\def\dfntn{\begin{definition}}
\def\dfntnl#1{\begin{definition}\label{#1}}
\def\edfntn{\end{definition}}
\def\nmrt{\begin{enumerate}}
\def\enmrt{\end{enumerate}}
\def\qtn{\begin{equation}}
\def\qtnl#1{\begin{equation}\label{#1}}
\def\eqtn{\end{equation}}
\def\lmm{\begin{lemma}}
\def\lmml#1{\begin{lemma}\label{#1}}
\def\elmm{\end{lemma}}
\def\crllr{\begin{corollary}}
\def\crllrl#1{\begin{corollary}\label{#1}}
\def\ecrllr{\end{corollary}}
\def\css{\begin{cases}}
\def\ecss{\end{cases}}
\begin{document}
\title{On Schur 3-groups}
\author{Grigory Ryabov}
\address{Novosibirsk State University, 2 Pirogova St., 630090 Novosibirsk, Russia}
\email{gric2ryabov@gmail.com}
\thanks{The work is supported by the Russian Foundation for Basic Research (project 13-01-00505).}
\date{}

\begin{abstract}
Let $G$ be a finite group. If $\Gamma$ is a permutation group with  $G_{right}\leq\Gamma\leq Sym(G)$ and $\mathcal{S}$ is the set of orbits of the stabilizer of the identity $e=e_{G}$ in $\Gamma$, then the $\mathbb{Z}$-submodule $\mathcal{A}(\Gamma,G)=Span_{\mathbb{Z}}\{\underline{X}:\ X\in\mathcal{S}\}$ of the group ring $\mathbb{Z} G$ is an $S$-ring as it was observed by Schur. Following P\"{o}schel an $S$-ring $\mathcal{A}$ over $G$ is said to be \emph{schurian} if there exists a suitable permutation group $\Gamma$ such that $\mathcal{A}=\mathcal{A}(\Gamma,G)$. A finite group $G$ is called a \emph{Schur group} if every $S$-ring over $G$ is schurian.  We prove that the groups $M_{3^n}=\langle a,b\;|\:a^{3^{n-1}}=b^3=e,a^b=a^{3^{n-2}+1}\rangle$, where $n\geq3$, are not Schur. Modulo previously obtained results, it follows that every Schur $p$-group is abelian whenever $p$ is an odd prime.
\\
\\
\textbf{Keywords}: Permutation groups, Cayley schemes, $S$-rings,~Schur groups.
\\
\textbf{MSC}:05E30, 20B30.
\end{abstract}

\maketitle

\section{Introduction}
	
	Let $G$ be a finite group, $e$  the identity element of $G$. Let  $\mathbb{Z}G$ be the integer group ring. Given $X\subseteq G$,  denote the element $\sum_{x\in X} {x}$ by $\underline{X}$ .
	\begin{definition}
A subring  $\mathcal{A}$ of  $\mathbb{Z} G$ is called an \emph{$S$-ring} over $G$ if there exists a partition $\mathcal{S}=\mathcal{S}(\mathcal{A})$ of $G$ such that:

 $(1)$ $\left\{e\right\}\in\mathcal{S}$,

 $(2)$ $X\in\mathcal{S}\ \Rightarrow\ X^{-1}\in\mathcal{S}$,

 $(3)$ $\mathcal{A}=Span_{\mathbb{Z}}\{\underline{X}:\ X\in\mathcal{S}\}$.
\end{definition}
The elements of this partition are called \emph{the basic sets}  of the $S$-ring $\mathcal{A}$.

Let $\Gamma$ be a subgroup of $Sym(G)$ that contains  the subgroup of right shifts $G_{right}=\{x\mapsto xg,~x\in G:g\in G\}$. Let $\Gamma_e$ stand for the stabilizer of $e$ in $\Gamma$ and  $Orb(\Gamma_e,G)$ stand for the set of all orbits $\Gamma_e$ on $G$. As I. Schur proved in  \cite{Schur}, the $\mathbb{Z}$-submodule
$$\mathcal{A}=\mathcal{A}(\Gamma,~G)=Span_{\mathbb{Z}}\left\{\underline{X}:~X\in Orb(\Gamma_e,~G)\right\},$$
is an $S$-ring over $G$.

\begin{definition}
An $S$-ring $\mathcal{A}$ over  $G$ is called \emph{schurian} if $\mathcal{A}=\mathcal{A}\left(\Gamma,~G\right)$ for some $\Gamma$ with $G_{right}\leq \Gamma \leq Sym(G)$.
	\end{definition}
\begin{definition}
A finite group $G$ is called a \emph{Schur} group if every $S$-ring over $G$ is schurian.
\end{definition}
The problem of determining all Schur groups was suggested by R. P\"{o}schel in \cite{Po} about 40 years ago. He proved that a $p$-group, $p>3$, is  Schur if and only if it is cyclic. Using this result R. P\"{o}schel and M. Klin solved the isomorphism problem for circulant graphs with $p^n$ vertices, where $p$ is an odd prime and $n\geq 1$ is an integer \cite{KP}. Only 30 years later all cyclic Schur groups were classified  in  \cite{EKP1}. Strong necessary conditions of schurity for abelian groups were recently proved in \cite{EKP2}.
	
	All Schur groups of order $\leq 62$ were found by  computer calculations \cite{Fi,Ziv}. It turned out that there are non-abelian Schur groups. However, except for P\"{o}schel's result about $p$-groups, there were no general results on non-abelian Schur groups.
	Recently it was proved  \cite{PV} that every Schur group $G$ is solvable of derived length at most 2  and the number of distinct prime divisors of the order of $G$ does not exceed~$7$. In the same article  it was proved that only the groups $M_{3^n}=\langle a,b\;|\:a^{3^{n-1}}=b^3=e,a^b=a^{3^{n-2}+1}\rangle,~n\geq 3,$ might be non-abelian Schur $3$-groups. 

	The main result of this paper is  the following

\begin{theorem} \label{main}
The groups $M_{3^n}=\langle a,b\;|\:a^{3^{n-1}}=b^3=e,a^b=a^{3^{n-2}+1} \rangle$, $n\geq 3,$ are not Schur.
\end{theorem}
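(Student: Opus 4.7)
The plan is to exhibit, for every $n \ge 3$, a single $S$-ring $\mathcal{A}$ over $G = M_{3^n}$ that fails to be schurian. The structural features I would exploit are the cyclic normal subgroup $H = \langle a\rangle$ of index $3$, the commutator subgroup $Z = [G,G] = \langle a^{3^{n-2}}\rangle$ of order $3$ (which is central), and the twisting automorphism of $H$ induced by conjugation by $b$, namely $a \mapsto a^{3^{n-2}+1}$, of order $3$.

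\textbf{Construction and verification.} I would describe $\mathcal{A}$ by prescribing basic sets so that each of the three right cosets $H$, $Hb$, $Hb^2$ is a union of them. Inside $H$, the basic sets would be orbits of a suitable subgroup $\Sigma \le \aut(H)$ normalised by $b$-conjugation; natural candidates are cyclic subgroups of the form $\langle x \mapsto x^{1+3^k}\rangle$ for a well-chosen $k$. Inside the non-identity cosets $Hb$ and $Hb^2$, the basic sets would be defined so that they look orbit-like under any naive symmetry but are in fact linked by a ``twisted'' bijection $a^i b \leftrightarrow a^{\tau(i)} b^2$, with $\tau$ chosen so that closure under multiplication still holds. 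Verification that $\mathcal{A}$ is an $S$-ring reduces to bookkeeping inside $H$ using the relations $a^{3^{n-1}} = b^3 = e$ and $a^b = a^{3^{n-2}+1}$; closure under inversion is built in by symmetrising the definition.

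\textbf{Non-schurity.} Suppose, for a contradiction, that $\mathcal{A} = \mathcal{A}(\Gamma, G)$ for some $\Gamma$ with $G_{right} \le \Gamma \le \sym(G)$. Since $H$ is an $\mathcal{A}$-subgroup, it is fixed setwise by the stabiliser $\Gamma_e$. The orbits of $\Gamma_e$ inside $H$ are exactly the basic sets of $\mathcal{A}$ contained in $H$, and those in $Hb \cup Hb^2$ are the basic sets outside $H$. Using the structure constants of $\mathcal{A}$ --- in particular products of basic sets across the three cosets --- I would derive a relation that the orbits of $\Gamma_e$ in $Hb \cup Hb^2$ must satisfy, and show it is incompatible with the ``twisted'' partition built into the construction. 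Concretely, any combinatorial identification of $Hb$ with $Hb^2$ realisable by $\Gamma_e$ must respect the natural group-theoretic linkage provided by right multiplication by $b$, whereas by design $\mathcal{A}$ forces an inequivalent linkage.

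\textbf{Main obstacle.} The delicate step is the non-schurity argument: one must rule out every permutation group realising $\mathcal{A}$. The standard tool is to compute the $2$-closure of the Cayley scheme of $\mathcal{A}$ --- the largest permutation group $\aut(\mathcal{A}) \le \sym(G)$ containing $G_{right}$ whose $2$-orbits refine the scheme --- and then show that the orbits of $\aut(\mathcal{A})_e$ strictly refine the basic sets of $\mathcal{A}$. The difficulty lies in making the twisting simultaneously explicit enough to compute with and rigid enough to exclude all possible $\Gamma$. A decomposition of $\mathcal{A}$ along the normal series $1 < Z < H < G$, presumably in the form of a generalised wreath product, is the natural organising framework, and I would expect an induction on $n$ based at $n=3$ (where $M_{27}$ already admits a non-schurian $S$-ring) to handle the family uniformly.
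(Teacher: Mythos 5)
Your proposal is a plan rather than a proof: the two steps that carry the entire weight of the theorem --- writing down an explicit $S$-ring over $M_{3^n}$ and proving that no permutation group realises it --- are both deferred. You leave unspecified the subgroup $\Sigma\le\aut(\langle a\rangle)$, the twisting bijection $\tau$, and the structure constants from which the contradiction is supposed to come, and you acknowledge yourself that the non-schurity argument is the ``delicate step'' without supplying it. As it stands there is no verifiable content beyond the (correct, but already known) observation that one non-schurian $S$-ring suffices.

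Two more specific concerns. First, your organising decomposition is along $1<Z<\langle a\rangle<G$ with basic sets inside $\langle a\rangle$ given by orbits of automorphisms of the form $x\mapsto x^{1+3^k}$; S-rings assembled as (generalised) wreath products of schurian pieces along such a normal series tend to be schurian, so this is exactly the region of the landscape where a counterexample is hardest to find. The paper's construction for $n\ge 4$ is organised instead around the non-cyclic subgroup $H=\langle a^{3^{n-2}}\rangle\times\langle b\rangle$ of order $9$, and its basic sets such as $Z_4=a\{e,cb,c^2b^2\}\cup a^{-1}\{e,c^2b,cb^2\}$ cut across the cosets of $\langle a\rangle$ in a way your scheme does not accommodate; the contradiction is then purely arithmetic (the stabiliser of $a^2$ in $\aut(\mathcal{A})$ is shown to be trivial, forcing $|\aut(\mathcal{A})|=18$, which is incompatible with an orbit of length $12$). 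Second, your proposed induction on $n$ based at $n=3$ has no evident mechanism: the paper handles $n=3$ and $n\ge4$ by two unrelated explicit constructions (the $M_{27}$ example hinges on the non-normal subgroup $\langle a^3b\rangle$), and non-schurity of an $S$-ring over a quotient or subgroup does not in general lift to the whole group, so the inductive step would itself need a substantial new idea. To turn your outline into a proof you would have to commit to a concrete partition, verify closure of the spanned module under multiplication, and carry out the exclusion of all realising groups in full.
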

	
It is worth noting that the case of Schur $2$-groups was very recently analyzed in \cite{MP2} by M. Muzychuk and I. Ponomarenko. The author is grateful to both of them for fruitful discussions on the subject matters.

	From the above discussion and Theorem \ref{main}, we immediately obtain the following statement.
	
	\begin {corollary}
   Every Schur $p$-group is abelian whenever $p$ is an odd prime.
	\end{corollary}

	\section{Preliminaries}
	
	In this section we recall some definitions and facts about $S$-rings and Cayley schemes. All of them are taken from \cite{MP}, so we skip further references here.
	
	\begin{definition}
Let $G$ be a finite group, $\mathcal{R}$  a family of binary relations on $G$.
The pair \emph{$\mathcal{C}=\left(G,\mathcal{R}\right)$} is called a \emph{Cayley scheme} over $G$ if the following properties are satisfied:

$(1)$  $\mathcal{R}$ forms a partition of the set $G\times G$;

$(2)$  $Diag\left(G\times G\right)\in\mathcal{R}$;

$(3)$  $\mathcal{R}=\mathcal{R}^*$, i.\,e., if $R\in\mathcal{R}$ then $R^*=\{(h,g)\mid (g,h)\in R\}\in\mathcal{R}$;

$(4)$  if $R,~S,~T\in\mathcal{R}$ and  $(f,g)\in T$, then the number $|\{h\in G:(f,h)\in R,~(h,g)\in S\}|$ does not depend on the choice of $(f,g)$.

\end{definition}
	
	Let $\mathcal{A}$ be an  $S$-ring over $G$. We associate each basic set  $X\in \mathcal{S}(\mathcal{A})$  with the binary relation  $\{(a,xa) \mid a\in G, x\in X\}\subseteq G\times G$ and denote it by $R(X)$. The set of all such binary relations forms a partition $\mathcal{R}(\mathcal{S}(\mathcal{A}))$ of  $G\times G$.
	\begin{lemma}
$\mathcal{C}(\mathcal{A})=(G,\mathcal{R}(\mathcal{S}(\mathcal{A})))$ is a Cayley scheme over $G$. The map $\mathcal{A}\mapsto \mathcal{C}(\mathcal{A})$ is a bijection between $S$-rings  and Cayley schemes over $G$.
\end{lemma}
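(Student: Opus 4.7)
The plan is to treat the two assertions separately: first, that $\mathcal{C}(\mathcal{A})$ is indeed a Cayley scheme over $G$; second, that the assignment $\mathcal{A}\mapsto\mathcal{C}(\mathcal{A})$ admits an explicit inverse, establishing the bijection.

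For the first assertion I would verify the four axioms directly from the construction $R(X)=\{(a,xa):a\in G,\,x\in X\}$. Property (1) is immediate: each pair $(a,b)\in G\times G$ is determined by the single element $ba^{-1}\in G$, and this element lies in exactly one basic set $X\in\mathcal{S}(\mathcal{A})$, so $(a,b)$ lies in exactly one $R(X)$. Property (2) follows from $R(\{e\})=\{(a,a):a\in G\}=Diag(G\times G)$. Property (3) reduces to the direct calculation $R(X)^*=R(X^{-1})$, together with the $S$-ring axiom $X^{-1}\in\mathcal{S}(\mathcal{A})$. The substantive axiom is (4). For basic sets $X,Y,Z$ and any $(f,g)\in R(Z)$, the count $|\{h\in G:(f,h)\in R(X),\,(h,g)\in R(Y)\}|$ equals the number of factorizations $gf^{-1}=yx$ with $x\in X$, $y\in Y$, which is exactly the coefficient of $gf^{-1}$ in the product $\underline{Y}\cdot\underline{X}\in\mathbb{Z}G$. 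Since $\mathcal{A}$ is a subring, this product is a $\mathbb{Z}$-linear combination of the $\underline{W}$ for $W\in\mathcal{S}(\mathcal{A})$, so the coefficient at $gf^{-1}$ depends only on the basic set containing $gf^{-1}$, namely $Z$.

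For the bijection, I would exhibit the inverse map explicitly. To a Cayley scheme $(G,\mathcal{R})$, associate the $\mathbb{Z}$-span of $\{\underline{X_R}:R\in\mathcal{R}\}$, where $X_R=\{x\in G:(e,x)\in R\}$. Axiom (1) for $\mathcal{R}$ gives the partition property for the $X_R$; $\{e\}=X_{Diag}$ covers the identity axiom; and the equality $X_{R^*}=X_R^{-1}$, which yields inversion-closure, uses right-translation invariance of the relations (inherent in the ``Cayley'' qualifier and matched by the construction $R(X)$). Multiplicative closure $\underline{X_R}\cdot\underline{X_S}\in\mathcal{A}$ is precisely axiom (4): the coefficient of $g\in X_T$ in this product is the intersection number counted at any pair $(e,g)\in T$, which by (4) depends only on $T$.

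To conclude, I would check that the two assignments compose to the identity in both directions: $X_{R(X)}=X$ is immediate from the definition of $R(X)$, and $R(X_R)=R$ follows from the right-translation invariance $(a,b)\in R\iff (e,ba^{-1})\in R$. The only nonroutine step, and therefore the main obstacle, is the translation between group-ring multiplication and the combinatorial intersection numbers of axiom (4); once one recognises the pair-count in (4) as a coefficient in a product $\underline{Y}\cdot\underline{X}$ inside $\mathbb{Z}G$, both directions of the correspondence drop out in a single line.
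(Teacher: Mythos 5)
Your proof is correct, and it is essentially the standard argument (Schur's correspondence) for this fact. The paper itself gives no proof at all: the lemma sits in the preliminaries, where the author states that everything is quoted from the survey of Muzychuk and Ponomarenko, so there is nothing in the text to compare against line by line; what you have written is a self-contained version of the argument that the cited reference supplies. Your identification of the intersection number in axiom (4) with the coefficient of $gf^{-1}$ in $\underline{Y}\cdot\underline{X}$ is exactly the right pivot, and both compositions $X_{R(X)}=X$ and $R(X_R)=R$ check out. One point worth making explicit: as literally stated, the paper's definition of a Cayley scheme is just that of an association scheme on the set $G$ and does not include invariance under $G_{right}$; without that condition the map $\mathcal{A}\mapsto\mathcal{C}(\mathcal{A})$ is injective but not surjective, since its image consists precisely of the right-translation-invariant schemes. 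You correctly flag that you are using this invariance (for $X_{R^*}=X_R^{-1}$, for $R(X_R)=R$, and implicitly for the nonemptiness of each $X_R$), and you are right that it is intended to be part of the definition of ``Cayley'' scheme --- it is in the source the paper cites --- so your proof is complete once that clause is restored to the definition. This is a defect of the paper's statement of the definition, not of your argument.
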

	
	The relation $R(X)$ is called \emph{the basic relation} of the scheme $\mathcal{C}(\mathcal{A})$ corresponding to $X$.
	
	\begin{definition}
Cayley schemes  $\mathcal{C}=(G,\mathcal{R})$ and $\mathcal{C}^{'}=(G^{'},\mathcal{R}^{'})$ are called \emph{isomorphic} if there exists a bijection $f:G\rightarrow G^{'}$ such that  $\mathcal{R}^{'}=\mathcal{R}^f$, where $\mathcal{R}^f=\{R^f:~R\in\mathcal{R}\}$ and $R^f=\{(a^f,~b^f):~(a,~b)\in R\}$.
	\end{definition}
	
	The set of all isomorphisms from $\mathcal{C}$ onto $\mathcal{C}^{'}$  is denoted by $\operatorname{Iso}(\mathcal{C},\mathcal{C}^{'})$. The group $\operatorname {Iso}(\mathcal{C})=\operatorname{Iso}(\mathcal{C},\mathcal{C})$  of all  isomorphisms of  $\mathcal{C}$ onto itself contains the normal
subgroup
 $$\operatorname {Aut}(\mathcal{C})=\{f\in \operatorname {Iso}(\mathcal{C}):~R^f=R,~R\in\mathcal{R}\}$$
called \emph{the automorphism group} of  $\mathcal{C}$.
	
\begin{definition}
	Two $S$-rings $\mathcal{A}$  over $G$ and $\mathcal{A}^{'}$ over $G^{'}$ are called \emph{isomorphic} if there exists an isomorphism of the corresponding Cayley schemes $\mathcal{C}(\mathcal{A})$ and $\mathcal{C^{'}}(\mathcal{A^{'}})$ taking  the identity element of $G$  to the identity element of $G^{'}$. It is called \emph{the isomorphism} from $\mathcal{A}$ to $\mathcal{A}^{'}$.
\end{definition}
	
	We denote the set of all isomorphisms from  $\mathcal{A}$ onto $\mathcal{A}^{'}$ by  $\operatorname{Iso}(\mathcal{A},\mathcal{A}^{'})$.  The group $\operatorname{Iso}(\mathcal{A})=\operatorname {Iso}(\mathcal{A},\mathcal{A})$ contains the normal subgroup
	$$\operatorname {Aut}(\mathcal{A})=\{f\in \operatorname {Iso}(\mathcal{A}):~R(X)^f=R(X),~X\in \mathcal{S}(\mathcal{A})\}$$
called \emph{the automorphism group} of  $\mathcal{A}$.
	\begin{lemma}
	Suppose that $\mathcal{A}$ is an $S$-ring, $\mathcal{C}\left(\mathcal{A}\right)$ is the corresponding Cayley scheme. Then
	$$\operatorname {Iso}(\mathcal{A})=\operatorname{Iso}(\mathcal{C(\mathcal{A})})_{e},~\operatorname {Iso}(\mathcal{C}(\mathcal{A}))=G_{right}\operatorname{Iso}(\mathcal{A}),$$
	$$\operatorname {Aut}(\mathcal{A})=\operatorname {Aut}(\mathcal{C}(\mathcal{A}))_{e},~\operatorname {Aut}(\mathcal{C}(\mathcal{A}))=G_{right}\operatorname {Aut}(\mathcal{A}).$$
 \end{lemma}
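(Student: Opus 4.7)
The whole lemma unwinds from two basic inputs: (i) the definition in the excerpt that an S-ring isomorphism is a Cayley-scheme isomorphism taking the identity to the identity, and (ii) the containment $G_{right}\leq\operatorname{Aut}(\mathcal{C}(\mathcal{A}))$.

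First I would verify (ii) by direct computation. For $g\in G$ and $X\in\mathcal{S}(\mathcal{A})$, applying the right shift $r_{g}\colon x\mapsto xg$ to $R(X)=\{(a,xa):a\in G,\,x\in X\}$ produces $\{(ag,xag):a\in G,\,x\in X\}$, which after reindexing $a':=ag$ is $R(X)$ itself. Hence each $r_{g}$ preserves every basic relation, so $G_{right}\leq\operatorname{Aut}(\mathcal{C}(\mathcal{A}))\leq\operatorname{Iso}(\mathcal{C}(\mathcal{A}))$. With this in hand, the first identity $\operatorname{Iso}(\mathcal{A})=\operatorname{Iso}(\mathcal{C}(\mathcal{A}))_{e}$ is just the definition of an S-ring isomorphism restated, and the third identity $\operatorname{Aut}(\mathcal{A})=\operatorname{Aut}(\mathcal{C}(\mathcal{A}))_{e}$ follows from the first because on both sides the ``Aut'' subgroup is carved out of the corresponding ``Iso'' group by exactly the same condition, namely that every basic relation $R(X)$ is fixed setwise.

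For the remaining two identities I would invoke the standard coset decomposition of a permutation group containing a transitive subgroup: if $T\leq\Gamma\leq\operatorname{Sym}(\Omega)$ with $T$ transitive on $\Omega$, then $\Gamma=T\cdot\Gamma_{\alpha}$ for every $\alpha\in\Omega$. Applying this with $T=G_{right}$ (which is in fact regular on $G$), $\alpha=e$, and $\Gamma=\operatorname{Iso}(\mathcal{C}(\mathcal{A}))$ — legal by (ii) — gives $\operatorname{Iso}(\mathcal{C}(\mathcal{A}))=G_{right}\cdot\operatorname{Iso}(\mathcal{C}(\mathcal{A}))_{e}=G_{right}\operatorname{Iso}(\mathcal{A})$, where the last equality uses the first identity already established. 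Running the same argument with $\Gamma=\operatorname{Aut}(\mathcal{C}(\mathcal{A}))$, which still contains $G_{right}$, yields the fourth identity. No step is a serious obstacle; the only point to handle with care is the compatibility of composition conventions in the decomposition step, which amounts to checking that, for $f\in\operatorname{Iso}(\mathcal{C}(\mathcal{A}))$ and $h=e^{f}$, the product $f\cdot r_{h^{-1}}$ fixes $e$ and therefore belongs to $\operatorname{Iso}(\mathcal{A})$ by the first identity.
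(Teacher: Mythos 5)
Your proposal is correct, and there is nothing in the paper to compare it against: the lemma appears in the Preliminaries with no proof, being quoted from the Muzychuk--Ponomarenko survey cited as \cite{MP}. Your argument --- the first and third identities are definitional, $G_{right}\leq\operatorname{Aut}(\mathcal{C}(\mathcal{A}))$ by the reindexing computation on $R(X)=\{(a,xa)\}$, and the second and fourth identities follow from the decomposition $\Gamma=T\,\Gamma_\alpha$ for a transitive subgroup $T\leq\Gamma$ --- is exactly the standard proof one would supply.
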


\begin{lemma}\label{schuriancond}
An $S$-ring $\mathcal{A}$ over $G$ is schurian if and only if   $\mathcal{S}(\mathcal{A})=Orb(\operatorname {Aut}(\mathcal{A}),G)$.
\end{lemma}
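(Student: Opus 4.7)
The plan is to use the identities $\operatorname{Aut}(\mathcal{A}) = \operatorname{Aut}(\mathcal{C}(\mathcal{A}))_{e}$ and $\operatorname{Aut}(\mathcal{C}(\mathcal{A})) = G_{right}\operatorname{Aut}(\mathcal{A})$ from the immediately preceding lemma, together with the standard fact that for any $\Gamma$ with $G_{right} \leq \Gamma \leq Sym(G)$ the $\Gamma$-orbits on $G \times G$ are exactly the sets $R(X) = \{(a, xa) : a \in G,\ x \in X\}$ as $X$ ranges over $\operatorname{Orb}(\Gamma_{e}, G)$. This follows by using a right shift from $G_{right} \leq \Gamma$ to translate any pair $(a,b)$ to $(e, ba^{-1})$, and it is precisely the computation that underlies Schur's original observation that $\mathcal{A}(\Gamma, G)$ is an $S$-ring.

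For the ``if'' direction, assume $\mathcal{S}(\mathcal{A}) = \operatorname{Orb}(\operatorname{Aut}(\mathcal{A}), G)$ and set $\Gamma := \operatorname{Aut}(\mathcal{C}(\mathcal{A}))$. The preceding lemma gives $G_{right} \leq \Gamma \leq Sym(G)$ and $\Gamma_{e} = \operatorname{Aut}(\mathcal{A})$, so $\operatorname{Orb}(\Gamma_{e}, G) = \mathcal{S}(\mathcal{A})$ and therefore $\mathcal{A}(\Gamma, G) = \mathcal{A}$, exhibiting $\mathcal{A}$ as schurian.

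For the ``only if'' direction, assume $\mathcal{A} = \mathcal{A}(\Gamma, G)$ for some admissible $\Gamma$. By the orbit observation above, the $\Gamma$-orbits on $G \times G$ are precisely the basic relations $R(X)$, $X \in \mathcal{S}(\mathcal{A})$, so $\Gamma \leq \operatorname{Aut}(\mathcal{C}(\mathcal{A}))$, and therefore $\Gamma_{e} \leq \operatorname{Aut}(\mathcal{C}(\mathcal{A}))_{e} = \operatorname{Aut}(\mathcal{A})$. Since each basic set $X$ is a $\Gamma_{e}$-orbit and $\Gamma_{e} \leq \operatorname{Aut}(\mathcal{A})$, it is contained in a single $\operatorname{Aut}(\mathcal{A})$-orbit. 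Conversely, given $\sigma \in \operatorname{Aut}(\mathcal{A})$ and $x \in X \in \mathcal{S}(\mathcal{A})$, the pair $(e, x)$ lies in $R(X)$; as $\sigma$ fixes $e$ and preserves $R(X)$, we obtain $(e, x^{\sigma}) \in R(X)$, hence $x^{\sigma} \in X$, so each $\operatorname{Aut}(\mathcal{A})$-orbit is contained in a single basic set. These two containments force the partitions $\mathcal{S}(\mathcal{A})$ and $\operatorname{Orb}(\operatorname{Aut}(\mathcal{A}), G)$ to coincide.

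The only non-routine ingredient is the identification of $\Gamma$-orbits on $G \times G$ via right translations; everything else is a straightforward unwinding of definitions and of the preceding lemma, so no substantial obstacle is expected.
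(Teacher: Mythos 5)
Your proof is correct. Note, however, that the paper does not actually prove this lemma: it is stated in Section~2 as one of the facts quoted without proof from the Muzychuk--Ponomarenko survey \cite{MP}, so there is no in-paper argument to compare against. Your derivation --- identifying the $\Gamma$-orbits on $G\times G$ with the relations $R(X)$, $X\in\operatorname{Orb}(\Gamma_e,G)$, via right translation to $(e,ba^{-1})$, and then squeezing the two partitions between the inclusions ``each basic set lies in an $\operatorname{Aut}(\mathcal{A})$-orbit'' and ``each $\operatorname{Aut}(\mathcal{A})$-orbit lies in a basic set'' --- is the standard proof of this equivalence. The only step you use silently is that the partition $\mathcal{S}(\mathcal{A})$ is uniquely recoverable from the module $\mathcal{A}$ (so that $\mathcal{A}=\mathcal{A}(\Gamma,G)$ indeed forces $\mathcal{S}(\mathcal{A})=\operatorname{Orb}(\Gamma_e,G)$); this is a standard fact and does not affect the validity of the argument.
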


\begin{definition}
Let $\mathcal{A}$ be an $S$-ring over $G$. A subgroup $H \leq G$ is called \emph{an $\mathcal{A}$-subgroup} if $\underline{H}\in \mathcal{A}$.
\end{definition}

\begin{definition}
Let $L,U$ be subgroups of a group $G$ and $L$ be normal in $U$. A section $U/L$ of $G$ is called an \emph{$\mathcal{A}$-section} if $U$ and $L$ are $\mathcal{A}$-subgroups.
\end{definition}

\begin{lemma}\label{ssection}
Let $D=U/L$ be an $\mathcal{A}$-section. Then the module
$$\mathcal{A}_D=Span_{\mathbb{Z}}\left\{\underline{X}^{\pi}:~X\in\mathcal{S}\left(\mathcal{A}\right),~X\subseteq U\right\},$$
where $\pi:U\rightarrow U/L$ is the canonical homomorphism, is an $S$-ring over $D$.

In addition, if $\mathcal{A}$ is schurian, then $\mathcal{A}_D$ is schurian too.
\end{lemma}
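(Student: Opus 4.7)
The plan for the first assertion is to exhibit the partition $\mathcal{S}_D=\{X^\pi:\ X\in\mathcal{S}(\mathcal{A}),\ X\subseteq U\}$ of $D$, where $X^\pi:=\pi(X)$, and verify that its $\mathbb{Z}$-linear span coincides with $\mathcal{A}_D$ and satisfies the $S$-ring axioms. The key step is that the sets $X^\pi$ are pairwise equal or disjoint: since $\underline{L}\in\mathcal{A}$, the product $\underline{X}\cdot\underline{L}=\sum_g |X\cap gL|\,g$ lies in $\mathcal{A}$ and so has coefficients constant on each basic set; in particular $XL$ is a union of basic sets of $\mathcal{A}$, and if $X^\pi\cap(X')^\pi\neq\emptyset$ for basic $X,X'\subseteq U$, then $X'\cap XL\neq\emptyset$ forces $X'\subseteq XL$, and symmetrically $X'L=XL$, so $X^\pi=(X')^\pi$. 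Coverage of $D$ follows from $\underline{U}\in\mathcal{A}$ displaying $U$ as a disjoint union of basic sets. The axioms $\{e\}^\pi=\{e\}\in\mathcal{S}_D$ and $(X^{-1})^\pi=(X^\pi)^{-1}\in\mathcal{S}_D$ are immediate, and closure of $\mathcal{A}_D$ under multiplication is checked by lifting: $\underline{X}\cdot\underline{Y}\in\mathcal{A}\cap\mathbb{Z}U$ expands as $\sum_Z c_Z\underline{Z}$ with $Z\in\mathcal{S}(\mathcal{A})$, $Z\subseteq U$, and passing through the ring homomorphism $\mathbb{Z}U\to\mathbb{Z}D$ (while using the constancy of $|X\cap xL|$ on $X$ to relate set-sums to ring-map images) expresses $\underline{X^\pi}\cdot\underline{Y^\pi}$ as an integer combination of elements of $\{\underline{Y'}:Y'\in\mathcal{S}_D\}$.

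For the schurian part, set $H=\operatorname{Aut}(\mathcal{A})$ and verify the criterion of Lemma~\ref{schuriancond}. As a subgroup of $\operatorname{Aut}(\mathcal{C}(\mathcal{A}))_e$, the group $H$ preserves every basic set of $\mathcal{A}$, hence also $U$, and it preserves the basic relation $R(\underline{L})$, which is precisely the partition of $G$ into left cosets of $L$; consequently the $H$-action on $U$ descends via $\pi$ to an action of a group $\bar H$ on $D$ fixing the identity. The identity $\sigma(xa)=x'\sigma(a)$ with $x'\in X$ (valid because $\sigma\in\operatorname{Aut}(\mathcal{C}(\mathcal{A}))$ preserves $R(X)$) projects under $\pi$ to $\bar\sigma(za)=\bar z\,\bar\sigma(a)$ with $\bar z\in X^\pi$, showing that $\bar H$ preserves every basic relation $R(X^\pi)$ of $\mathcal{C}(\mathcal{A}_D)$, so $\bar H\leq\operatorname{Aut}(\mathcal{A}_D)$. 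Schurity of $\mathcal{A}$ makes each $X$ a single $H$-orbit, so $X^\pi$ is a single $\bar H$-orbit and hence is contained in one $\operatorname{Aut}(\mathcal{A}_D)$-orbit; conversely every such orbit lies inside a basic set of $\mathcal{A}_D$ by definition of the automorphism group, so the two partitions coincide and Lemma~\ref{schuriancond} gives schurity of $\mathcal{A}_D$.

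The main obstacle I anticipate is the disjointness/equality dichotomy for the sets $X^\pi$ together with the multiplicative closure of $\mathcal{A}_D$: basic sets of $\mathcal{A}$ can a priori intersect $L$-cosets in wildly varying numbers of elements, and it is only through the coefficient-constancy of products such as $\underline{X}\cdot\underline{L}$ and $\underline{X}\cdot\underline{Y}$ on basic sets, which rests crucially on $U$ and $L$ being $\mathcal{A}$-subgroups, that these pathologies are ruled out. Once this bookkeeping is complete, the schurian part reduces to the standard fact that the automorphism group of a Cayley scheme descends cleanly through any $\mathcal{A}$-section.
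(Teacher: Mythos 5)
The paper states this lemma without proof, citing it as a known fact from \cite{MP}, so there is no in-paper argument to compare against; your proposal is a correct and complete proof of both assertions and follows the standard route. The two essential points — that each $L$-coset meets a basic set $X\subseteq U$ in a constant number $c_X$ of elements (whence $\pi(\underline{X})=c_X\,\underline{X^\pi}$, the images $X^\pi$ are pairwise equal or disjoint, and integrality of the structure constants of $\mathcal{A}_D$ follows by comparing coefficients), and that $\operatorname{Aut}(\mathcal{A})$ preserves $U$ and the $L$-coset partition and so descends to a subgroup of $\operatorname{Aut}(\mathcal{A}_D)$ acting transitively on each $X^\pi$ — are exactly the ones needed, and you have verified both.
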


\begin{lemma}\label{block}
Let  $\mathcal{A}$ be a schurian $S$-ring, $\mathcal{A}=\mathcal{A}\left(\Gamma,~G\right)$. Then $\Gamma$  is imprimitive if and only if there exists a non-trivial proper $\mathcal{A}$-subgroup $H$ of $G$. In this case  the left cosets of $H$ form the non-trivial block system  of $\Gamma$.
\end{lemma}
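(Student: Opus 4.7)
The plan is to establish both directions of the equivalence via the standard correspondence between subgroups of $G$ and blocks of the regular right-shift action $G_{right}$ on $G$ that contain the identity $e$. Two facts I would use repeatedly are that $\Gamma$ is transitive (since it contains the transitive group $G_{right}$) and that consequently $\Gamma=G_{right}\cdot\Gamma_{e}$ by orbit--stabilizer.

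\emph{The forward direction.} Assume $\Gamma$ is imprimitive and let $\mathcal B$ be a non-trivial $\Gamma$-invariant partition of $G$; let $H$ be the block containing $e$. Since $G_{right}\le\Gamma$ preserves $\mathcal B$, $H$ is also a block for $G_{right}$. First I would show $H$ is a subgroup: for any $g\in H$, the image of $H$ under the right shift by $g^{-1}$ is a block containing $g\cdot g^{-1}=e$, so it meets $H$ and therefore equals $H$; this yields $Hg^{-1}\subseteq H$ for every $g\in H$, so $H$ is closed under multiplication and inversion. Next, each $\sigma\in\Gamma_{e}$ fixes $e$ and permutes the blocks of $\mathcal B$, hence $\sigma(H)=H$. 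Thus $H$ is a union of $\Gamma_{e}$-orbits, i.e., of basic sets of $\mathcal A$, giving $\underline H\in\mathcal A$, so $H$ is an $\mathcal A$-subgroup; non-triviality of $\mathcal B$ forces $1<|H|<|G|$.

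\emph{The reverse direction, together with the concluding description.} Let $H$ be a non-trivial proper $\mathcal A$-subgroup. I plan to verify directly that the family $\mathcal B$ of cosets of $H$ in $G$ is a $\Gamma$-invariant block system. Since $\underline H\in\mathcal A$, the subgroup $H$ is a union of basic sets, so $\sigma(H)=H$ for every $\sigma\in\Gamma_{e}$. Writing an arbitrary $\gamma\in\Gamma$ as $\gamma=\tau\sigma$ with $\tau\in G_{right}$, say $\tau:x\mapsto xg$, and $\sigma\in\Gamma_{e}$, one obtains $\gamma(H)=\tau(\sigma(H))=\tau(H)=Hg\in\mathcal B$. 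Hence $\Gamma$ permutes $\mathcal B$, which is non-trivial because $1<|H|<|G|$; so $\Gamma$ is imprimitive and $\mathcal B$ is precisely the block system of cosets of $H$ claimed by the lemma.

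The main obstacle is conceptual rather than computational: one must place the block/subgroup correspondence on the correct side, recognising that the right-shift action $x\mapsto xg$ produces cosets of the form $Hg$ as blocks, and then check that $\Gamma_{e}$-invariance of $H$ combined with the factorisation $\Gamma=G_{right}\Gamma_{e}$ upgrades to $\Gamma$-invariance of the whole coset partition. Every remaining step is a routine verification from the definitions already recalled in the preliminaries.
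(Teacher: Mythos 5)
The paper itself offers no proof of this lemma---it is imported from the survey \cite{MP}---so your argument has to be judged on its own merits. Both implications are handled by the standard and correct mechanism: a block through $e$ is carried to itself by $x\mapsto xg^{-1}$ for every $g$ in the block (hence is a subgroup) and is $\Gamma_e$-invariant (hence a union of basic sets, i.e.\ an $\mathcal{A}$-subgroup); conversely, a $\Gamma_e$-invariant subgroup $H$ combined with the factorisation $\Gamma=G_{right}\Gamma_e$ yields a $\Gamma$-invariant partition. Up to the final sentence everything you write is sound.

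The genuine problem is the last identification. Your computation gives $\gamma(H)=\tau(\sigma(H))=Hg$, so the block system you actually construct is the partition of $G$ into the \emph{right} cosets $Hg$---consistently with the fact that $R(H)=\bigcup_{X\subseteq H}R(X)=\{(a,ha):a\in G,\ h\in H\}$ is an equivalence relation whose class containing $a$ is $Ha$. The lemma, however, asserts that the \emph{left} cosets $gH$ are the blocks, and for a non-normal $H$ the partition $\{gH\}$ is not even $G_{right}$-invariant: $\tau_k(gH)=gHk=(gk)H^k$ is a left coset of $H^k$, not of $H$. So your closing claim that the family $\{Hg\}$ is ``precisely the block system of cosets of $H$ claimed by the lemma'' silently conflates two different partitions; what you proved is the right-coset version, and the left-coset version cannot be proved for non-normal $H$ under the paper's conventions $G_{right}=\{x\mapsto xg\}$ and $R(X)=\{(a,xa)\}$. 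The distinction is not academic here: the subgroup $U=\langle a^3b\rangle$ of $M_{27}$ used in Section~3 is not normal, so $\{gU\}$ and $\{Ug\}$ genuinely differ, and the paper's later argument leans on which of the two is the block system. You should either state and use the right-coset form of the lemma or explicitly flag and resolve the discrepancy with the printed statement.
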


\section{Non-schurity of $M_{27}$}

Let $G=M_{27}=\langle a,b\;|\:a^{3^2}=b^3=e,a^b=a^{4} \rangle$. Denote the subgroup $\langle a^3b \rangle=\{e,~a^3b,~a^6b^2\}$ by  $U$. Consider the sets
$$Z_0=\left\{e\right\},$$
$$Z_1=\left\{a^3b,~a^6b^2\right\}=U\setminus \left\{e\right\},$$
$$Z_2=\left\{a,~a^3,~a^6,~a^8,~a^4b^2,~a^7b,~a^8b,~a^8b^2\right\},$$
$$Z_3= G\setminus \left(Z_0 \cup Z_1 \cup  Z_2\right).$$

The sets $Z_0,~Z_1,~Z_2,$ and $Z_3$  form the partition of $G$, which is denoted by $\mathcal{S}$. Note that  $Z_i=Z_i^{-1},~i=0,\ldots,3$.

\begin{lemma}
The $\mathbb{Z}$-module $\mathcal{A}$ spanned by the elements $\xi_i=\underline{Z_i},~i=0,\ldots,3,$ is a commutative  $S$-ring  over~$G$.
	\end{lemma}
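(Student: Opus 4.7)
To verify that $\mathcal{A}$ is an $S$-ring, I will check the three defining axioms and then deduce commutativity. Axiom (1), $\{e\} = Z_0 \in \mathcal{S}$, and the fact that the $Z_i$ partition $G$, are immediate from the definitions. Axiom (2), $Z_i^{-1} = Z_i$, is verified by direct inspection, using the commutation rules $b a^k = a^{7k} b$ and $b^2 a^k = a^{4k} b^2$ derived from $a^b = a^4$ (since $4 \cdot 7 \equiv 1 \pmod 9$); for instance $(a^3 b)^{-1} = b^{-1} a^{-3} = b^2 a^6 = a^{24} b^2 = a^6 b^2 \in Z_1$. The substantive content is axiom (3), closure of $\mathcal{A}$ under multiplication in $\mathbb{Z}G$.

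The key simplifying observation is that $U = \langle a^3 b \rangle$ is a subgroup of order $3$ with $\underline{U} = \xi_0 + \xi_1 \in \mathcal{A}$. This immediately yields $\xi_1^2 = (\underline{U} - \xi_0)^2 = \underline{U}^2 - 2\xi_0\underline{U} + \xi_0 = 3\underline{U} - 2\underline{U} + \xi_0 = 2\xi_0 + \xi_1$. Moreover, from $\xi_0 + \xi_1 + \xi_2 + \xi_3 = \underline{G}$ and $\xi_i \underline{G} = |Z_i|\underline{G}$ one obtains $\xi_i \xi_3 = |Z_i|\underline{G} - \xi_i - \xi_i\xi_1 - \xi_i\xi_2$ for each $i$, so every product involving $\xi_3$ is determined by the others. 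Verifying axiom (3) thus reduces to showing $\xi_1\xi_2, \xi_2^2 \in \mathcal{A}$.

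These two products I would compute by brute-force expansion ($16$ and $64$ terms respectively), reducing each monomial to the normal form $a^p b^q$ via the commutation rules above, and then checking that the multiplicity of each element inside a given $Z_k$ is constant. The expected outcomes are $\xi_1\xi_2 = \xi_3$ and $\xi_2^2 = 8\xi_0 + \xi_2 + 3\xi_3$; the remaining products $\xi_1\xi_3 = 2\xi_2 + \xi_3$, $\xi_2\xi_3 = 8\xi_1 + 6\xi_2 + 4\xi_3$, and $\xi_3^2 = 16\xi_0 + 8\xi_1 + 8\xi_2 + 10\xi_3$ then follow from the reduction formulas. Commutativity is automatic: the anti-involution $\sigma: \sum_g c_g g \mapsto \sum_g c_g g^{-1}$ on $\mathbb{Z}G$ fixes each $\xi_k$ (since $Z_k^{-1} = Z_k$) and satisfies $\sigma(xy) = \sigma(y)\sigma(x)$, so once $\xi_i\xi_j = \sum_k c^k_{ij}\xi_k$ is established one has $\xi_j\xi_i = \sigma(\xi_i\xi_j) = \sum_k c^k_{ij}\xi_k = \xi_i\xi_j$. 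The main obstacle is the combinatorial bookkeeping in the $\xi_2^2$ expansion, for which no structural shortcut via a tensor or wreath decomposition is available, since $U$ is the only nontrivial proper $\mathcal{A}$-subgroup.
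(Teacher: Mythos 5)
Your argument is correct and in substance takes the same route as the paper: the paper likewise establishes closure by direct computation of all the products $\xi_i\xi_j$ in the group ring $\mathbb{Z}G$ (it simply lists the resulting structure constants, which agree with yours) and derives commutativity from $Z_i=Z_i^{-1}$ exactly as you do. Your one genuine refinement is the reduction --- via $\underline{U}=\xi_0+\xi_1$ with $\underline{U}^2=3\,\underline{U}$, and $\xi_i\underline{G}=|Z_i|\,\underline{G}$ --- of the brute-force work to the two products $\xi_1\xi_2$ and $\xi_2^2$; this shortcut is sound (I checked that it reproduces the paper's values for $\xi_1\xi_3$, $\xi_2\xi_3$, $\xi_3^2$) but is not used in the paper.
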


\begin{proof}
The commutativity of $\mathcal{A}$ immediately follows from the fact that each class of the partition is closed with respect to taking inverse. The computations in the group ring of $G$ show that

$$\xi_0\xi_i= \xi_i\xi_0=\xi_i;$$

  $$\xi_1\xi_1=2\xi_0+\xi_1,$$
	$$\xi_1\xi_2=\xi_2\xi_1=\xi_3,$$
	$$\xi_1\xi_3=\xi_3\xi_1=\xi_3+2\xi_2;$$
	
	$$\xi_2\xi_2=8\xi_0+\xi_2+3\xi_3,$$
	$$\xi_2\xi_3=\xi_3\xi_2=8\xi_1+6\xi_2+4\xi_3;$$
	
	$$\xi_3\xi_3=16\xi_0+8\xi_1+8\xi_2+10\xi_3.$$
	
\end{proof}

\begin{prop}\label{M27}
The $S$-ring  $\mathcal{A}$ is not schurian.
\end{prop}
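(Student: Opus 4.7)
The plan is to invoke Lemma \ref{schuriancond}: $\mathcal{A}$ is schurian if and only if $\mathcal{S}(\mathcal{A}) = \operatorname{Orb}(\operatorname{Aut}(\mathcal{A}), G)$. I argue by contradiction, assuming $\mathcal{A} = \mathcal{A}(\Gamma, G)$ for some $\Gamma$ with $G_{right} \leq \Gamma \leq \operatorname{Sym}(G)$, and derive an incompatibility between the forced structure of $\Gamma$ and the given partition.

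The first step is to locate the $\mathcal{A}$-subgroups of $G$. A subgroup $H\leq G$ is an $\mathcal{A}$-subgroup precisely when it is a disjoint union of basic sets including $\{e\}$, so a straightforward check of the eight candidate unions $\{e\}\cup\bigcup_{i\in I}Z_{i}$, comparing cardinalities against divisors of $27$ and testing closure under multiplication, shows that the only non-trivial proper $\mathcal{A}$-subgroup is $U=\{e\}\cup Z_{1}=\langle a^{3}b\rangle$. By Lemma \ref{block}, the nine right cosets $C_{0}=U,C_{1},\ldots,C_{8}$ of $U$ form a $\Gamma$-invariant block system.

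Enumerating the cosets, I observe that $C_{0}=\{e\}\cup Z_{1}$, and that each non-identity coset $C_{i}$ contains exactly one element of $Z_{2}$ and two elements of $Z_{3}$. Since $Z_{2}$ is a single $\Gamma_{e}$-orbit of size $8$ distributed one-per-non-identity-block, $\Gamma_{e}$ acts transitively on the eight non-identity blocks; and since $Z_{3}$ is a single $\Gamma_{e}$-orbit of size $16=8\cdot 2$, the block-stabilizer $(\Gamma_{e})_{C_{i}}$ for $i\geq 1$ must fix the unique $Z_{2}$-element of $C_{i}$ and transpose the two $Z_{3}$-elements of $C_{i}$. Consequently the relation ``lying in the same right coset of $U$'' restricts to a $\Gamma_{e}$-invariant perfect matching $M$ on $Z_{3}$ whose eight pairs are permuted transitively by $\Gamma_{e}$.

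Producing the contradiction from these constraints is the main obstacle. My strategy is to exploit the non-normality of $U$: its three $G$-conjugates are $U=\langle a^{3}b\rangle$, $U^{a}=\langle a^{6}b\rangle$, and $U^{a^{2}}=\langle b\rangle$, and, crucially, only $U$ itself is an $\mathcal{A}$-subgroup while the other two conjugates are not the support of any union of basic sets. This asymmetry, combined with the structure constants (notably $\xi_{1}\xi_{3}=\xi_{3}+2\xi_{2}$ and $\xi_{2}\xi_{3}=8\xi_{1}+6\xi_{2}+4\xi_{3}$), rigidly links the matching $M$ and the associated map $\phi\colon Z_{3}\to Z_{2}$ sending $x$ to the unique $Z_{2}$-element of $Ux$ to group multiplication in $M_{27}$. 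I would then trace the consequences for a specific element, say $b\in Z_{3}$, under a putative $\Gamma_{e}$-element that swaps $Z_{1}$, aiming to show that no permutation satisfying all of these equivariances can exist. The heart of the argument is therefore a careful combinatorial book-keeping on the elements of $M_{27}$, since the purely algebraic data of structure constants and block sizes alone still admits hypothetical $\Gamma$'s; the contradiction must be drawn from the specific arithmetic of $M_{27}$.
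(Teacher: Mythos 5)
Your setup is sound and runs parallel to the paper's: you invoke Lemma \ref{schuriancond}, identify $U=\{e\}\cup Z_1=\langle a^3b\rangle$ as an $\mathcal{A}$-subgroup, and apply Lemma \ref{block} to get the cosets of $U$ as a block system for the putative group. The observations about how $Z_2$ and $Z_3$ distribute over the non-identity blocks are correct (though the block stabilizer need only \emph{permute}, not necessarily transpose, the two $Z_3$-elements of a block). But the proof stops exactly where it has to begin. Everything up to your last paragraph is preparatory; the final paragraph is a description of an intention (``I would then trace the consequences\dots aiming to show that no permutation satisfying all of these equivariances can exist''), and you concede yourself that ``the heart of the argument'' is a book-keeping computation you have not performed. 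No contradiction is actually derived, so as it stands this is a plan, not a proof.

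For comparison, the paper closes the argument in three lines once the block system is in hand. Since $b$ and $a^2$ both lie in the $K$-orbit $Z_3$, there is $\alpha\in K$ with $b^\alpha=a^2$. Being an automorphism of $\mathcal{C}(\mathcal{A})$, $\alpha$ carries the $R(Z_1)$-neighborhood $\{a^3b^2,\,a^6\}$ of $b$ onto the $R(Z_1)$-neighborhood $\{a^8b,\,a^5b^2\}$ of $a^2$; because $a^6,a^8b\in Z_2$ while $a^3b^2,a^5b^2\in Z_3$ and $\alpha$ preserves each basic set, this forces $(a^6)^\alpha=a^8b$ and $(a^3b^2)^\alpha=a^5b^2$. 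But $a^6$ and $a^3b^2$ lie in the same coset of $U$, whereas $a^8b\in a^5U$ and $a^5b^2\notin a^5U$ --- contradicting the fact that the cosets of $U$ are blocks of $K$. Note that this argument does not need your matching $M$, the map $\phi$, the uniqueness of $U$ among $\mathcal{A}$-subgroups, or the structure constants beyond knowing which basic set each relevant element lies in; it only needs one well-chosen element of $K$ and two explicit neighborhoods. You should either carry out a computation of this kind explicitly or accept that the proposal, in its current form, has not established the proposition.
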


\begin{proof}
Suppose on the contrary that  $\mathcal{A}$ is schurian. Then it follows from Lemma~\ref{schuriancond} that $\mathcal{A}=\mathcal{A}(KG_{right},G)$, where $K =\operatorname{Aut}(\mathcal{A})$ and the sets $Z_i,~i=0,\ldots,3,$ are the orbits of $K$.

\begin{lemma}\label{Ublock}
 The subgroup $U$ is an $\mathcal{A}$-subgroup. The left cosets of $U$ are the blocks of $K$.
	\end{lemma}
\begin{proof}
Note that $U=Z_0 \cup Z_1$. Then $\underline{U}\in \mathcal{A}$. Therefore $U$ is an $\mathcal{A}$-subgroup and we are done by Lemma \ref{block}.
\end{proof}

The elements $b$ and $a^2$ belong to $Z_3$. Since the latter is a $K$-orbit, there exists  $\alpha\in K$  taking  $b$ to $a^2$.

Let $\mathcal{C}(\mathcal{A})$ be the Cayley scheme corresponding to $\mathcal{A}$. Then $\alpha$ is an automorphism of $\mathcal{C}(\mathcal{A})$. Therefore $\alpha$ takes the neighborhood $\{a^3b^2,a^6\}$ of $b$ in $R(Z_1)$ to the neighborhood $\{a^8b,a^5b^2\}$ of $a^2$ in $R(Z_1)$. However, the elements $a^6$ and $a^8b$ lie in  $Z_2$, the elements $a^3b^2$ and $a^5b^2$ lie in $Z_3$. Thus, since $\alpha$ preserves $Z_2$ and $Z_3$, we have
$${(a^6)}^{\alpha}=a^8b;\eqno(1)$$
$${(a^3b^2)}^{\alpha}=a^5b^2.\eqno(2)$$
It follows from (1) and Lemma \ref{Ublock}  that ${(a^3b^2U)}^{\alpha}=a^5U$. This contradicts (2) because ${(a^3b^2)}^{\alpha}=a^5b^2 \notin a^5U$.
\end{proof}

\section{Non-schurity of $M_{3^n},~n\geq 4$}

Let $G=M_{3^n}=\langle a,b\;|\:a^{3^{n-1}}=b^3=e,a^b=a^{3^{n-2}+1} \rangle$ and $n\geq 4$. Put $A=\langle a\rangle$, $B=\langle b\rangle$, $c=a^{3^{n-2}}$, $C=\langle c\rangle$, and $H=C\times B$. Obviously, $ |A|=3^{n-1},~|B|=3,~|C|=3,$ and $|H|=9$.

Consider the sets
$$Z_0=\{e\},$$
$$Z_1=\{b, b^2\},$$
$$Z_2=\{c,c^2\},$$
$$Z_3=\{cb, cb^2, c^2b, c^2b^2\} = H\setminus (Z_0 \cup Z_1 \cup  Z_2).$$
$$Z_4= a\{e, cb, c^2b^2\}\cup a^{-1}\{e, c^2b, cb^2\},$$
$$Z_5=(aH\cup a^{-1}H)\setminus Z_4,$$
$$X_k = a^{3k}C\cup a^{-3k}C,~Y_k=(a^{3k}H\cup a^{-3k}H)\setminus X_k,~k = 1,\ldots,\frac{3^{n-3}-1}{2}, $$
$$T_j=a^jH\cup a^{-j}H,~ j = 2,~4,~5,\ldots,\frac{3^{n-2}-1}{2},~ j\not\equiv 0\mod 3.$$

Note that the sets $Z_i,~X_k,~Y_k,~T_j$  form the partition of $G$, denote it by $\mathcal{S}$. It is easy to check that $Z_i=Z_i^{-1},~X_k=X_k^{-1},~Y_k=Y_k^{-1},~T_j=T_j^{-1}$.

\begin{lemma}
 The $\mathbb{Z}$-module $\mathcal{A}$ spanned by the elements $\xi_i=\underline{Z_i},~\theta_k=\underline{X_k},~\psi_k=\underline{Y_k},~\varphi_j=\underline{T_j}$ is a commutative  $S$-ring  over group $G$.
	\end{lemma}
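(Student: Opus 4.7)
The plan is to verify the two non-trivial $S$-ring axioms for $\mathcal{A}$: closure under multiplication and commutativity. The conditions $\{e\}=Z_0\in\mathcal{S}$ and $X^{-1}=X$ for each basic set have already been recorded just before the statement, and $\mathcal{A}$ contains $1=e=\xi_0$. For commutativity I argue abstractly from symmetry: let $\sigma\colon\mathbb{Z}G\to\mathbb{Z}G$ be the involutive anti-automorphism $g\mapsto g^{-1}$. Since every generator $\xi_i,\theta_k,\psi_k,\varphi_j$ of $\mathcal{A}$ is $\sigma$-fixed, so is every element of $\mathcal{A}$; once ring closure is established, $\alpha\beta=\sigma(\alpha\beta)=\sigma(\beta)\sigma(\alpha)=\beta\alpha$ for all $\alpha,\beta\in\mathcal{A}$. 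Hence only closure under multiplication needs to be verified.

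The plan for ring closure rests on three structural facts. First, $c=a^{3^{n-2}}$ is central in $G$, so $H=C\times B$ is elementary abelian of order $9$ and is normal in $G$; consequently $\underline{a^iH}\cdot\underline{a^jH}=9\,\underline{a^{i+j}H}$ in $\mathbb{Z}G$ for all $i,j$. Second, the partition $\{Z_0,Z_1,Z_2,Z_3\}$ of $H$ coincides with the basic partition of the tensor product of the rank-$2$ symmetric $S$-rings on $C$ and on $B$, so $\mathrm{Span}_{\mathbb{Z}}\{\xi_0,\xi_1,\xi_2,\xi_3\}$ is already an $S$-ring on $H$. Third, conjugation by $a$ acts on $H$ via $c^a=c$ and $b^a=bc^{-1}$, fixing $C$ and cyclically permuting $B$, $\langle cb\rangle$, $\langle c^2b\rangle$. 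This last rule explains the chosen splits: from $\langle cb\rangle^{a^{-1}}=\langle c^2b\rangle$ one gets $(a\langle cb\rangle)^{-1}=a^{-1}\langle c^2b\rangle$, which forces $Z_4^{-1}=Z_4$; and since $a^3$ centralizes $H$ and $C$ is $a$-invariant, $X_k=a^{3k}C\cup a^{-3k}C$ is a union of whole $C$-cosets preserved by all the structural symmetries. With these facts, each product $\underline{X}\cdot\underline{Y}$ can be expanded by pushing all $b$-factors through the surrounding $a^{\pm m}$ via $b^a=bc^{-1}$ and collecting the result into full $H$-cosets, full $C$-coset pairs, or $a^{\pm 1}\langle cb\rangle$-type pieces, which are exactly the building blocks of $\mathcal{S}$.

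The main obstacle will be the bookkeeping at the ``degenerate'' target cosets, where the exponents $\pm i\pm j$ collide and the product lands back in $H$, in $aH\cup a^{-1}H$, or in some $a^{3k}H\cup a^{-3k}H$. In these cases the various subcontributions from the split sets have to combine in precisely the integer ratios dictated by $\xi_4,\xi_5$ (in the $i=\pm 1$ slot) and by $\theta_k,\psi_k$ (in the $\pm 3k$-slots); I expect this is exactly where the specific choices of $\langle cb\rangle$ for $Z_4$ and of $C$ for $X_k$ are forced by the conjugation action above, since any other order-$3$ subgroup of $H$ would leave residues outside $\mathrm{Span}_{\mathbb{Z}}\mathcal{S}$. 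Once the conjugation rule $b^a=bc^{-1}$ and the $H$-level tensor structure are in hand, the remaining verification is a routine, if lengthy, case analysis over all pairs of basic-set types.
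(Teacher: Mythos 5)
Your proposal is correct and follows essentially the same route as the paper: commutativity is deduced from the inverse-closedness of all basic sets via the anti-automorphism $g\mapsto g^{-1}$, and the ring axiom is verified by directly computing all pairwise products of the elements $\underline{Z_i},\underline{X_k},\underline{Y_k},\underline{T_j}$. The structural facts you isolate ($c$ central, $H$ normal with $\underline{a^iH}\cdot\underline{a^jH}=9\,\underline{a^{i+j}H}$, and $b^a=bc^{-1}$ cyclically permuting $B$, $\langle cb\rangle$, $\langle c^2b\rangle$, which indeed forces $Z_4^{-1}=Z_4$) are all accurate and are exactly what the paper uses implicitly in its one displayed sample computation of $\varphi_j\varphi_j$; the only content the paper supplies that you defer as ``routine'' is the explicit multiplication table itself.
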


\begin{proof}
The commutativity of $\mathcal{A}$ immediately follows from the fact that each class of the partition is closed with respect to taking inverse. The  computations in the group ring of $G$ show that

$$\xi_0\xi_i= \xi_i,~\xi_0\theta_k=\theta_k,~\xi_0\psi_k=\psi_k,~\xi_0\varphi_j=\varphi_j;$$

  $$\xi_1\xi_1=2\xi_0+\xi_1,$$
	$$\xi_1\xi_2=\xi_2\xi_1=\xi_3,$$
	$$\xi_1\xi_3=\xi_3\xi_1=\xi_3+2\xi_2,$$
	$$\xi_1\xi_4=\xi_4\xi_1=\xi_5,$$
	$$\xi_1\xi_5=\xi_5\xi_1=\xi_5+2\xi_4,$$
	$$\xi_1\theta_k=\theta_k\xi_1=\psi_k,$$
	$$\xi_1\psi_k=\psi_k\xi_1=\psi_k+2\theta_k,$$
	$$\xi_1\varphi_j=\varphi_j\xi_1=2\varphi_j;$$
	
	$$\xi_2\xi_2=2\xi_0+\xi_2,$$
	$$\xi_2\xi_3=\xi_3\xi_2=\xi_3+2\xi_1,$$
	$$\xi_2\xi_4=\xi_4\xi_2=\xi_5,$$
	$$\xi_2\xi_5=\xi_5\xi_2=2\xi_4+\xi_5,$$
	$$\xi_2\theta_k=\theta_k\xi_2=2\theta_k,$$
	$$\xi_2\psi_k=\psi_k\xi_2=2\psi_k,$$
	$$\xi_2\varphi_j=\varphi_j\xi_2=2\varphi_j;$$
	
	$$\xi_3\xi_3=\xi_3+2\xi_1+2\xi_2+4\xi_0,$$
	$$\xi_3\xi_4=\xi_4\xi_3=\xi_5+2\xi_4,$$
	$$\xi_3\xi_5=\xi_5\xi_3=3\xi_5+2\xi_4,$$
	$$\xi_3\theta_k=\theta_k\xi_3=2\psi_k,$$
	$$\xi_3\psi_k=\psi_k\xi_3=2\psi_k+4\theta_k,$$
	$$\xi_3\varphi_j=\varphi_j\xi_3=4\varphi_j;$$
	
	$$\xi_4\xi_4=\varphi_2+6\xi_0 +3\xi_3,$$
	$$\xi_4\xi_5=\xi_5\xi_4=2\varphi_2+6\xi_1+6\xi_2+3\xi_3,$$
	$$\xi_4\theta_k=\theta_k\xi_4=\varphi_{3k+1}+\varphi_{3k-1},$$
	$$\xi_4\psi_k=\psi_k\xi_4=2\varphi_{3k+1}+2\varphi_{3k-1},$$
	$$\xi_4\varphi_j=\varphi_j\xi_4=3\varphi_{j+1}+3\theta_l+3\psi_l,~ j-1=3l,$$
	$$\xi_4\varphi_j=\varphi_j\xi_4=3\varphi_{j-1}+3\theta_l+3\psi_l,~ j+1=3l;$$
	
	$$\xi_5\xi_5=4\varphi_2+6\xi_1+6\xi_2+9\xi_3+12\xi_0,$$
	$$\xi_5\theta_k=\theta_k\xi_5=2\varphi_{3k+1}+2\varphi_{3k-1},$$
	$$\xi_5\psi_k=\psi_k\xi_5=4\varphi_{3k+1}+4\varphi_{3k-1},$$
	$$\xi_5\varphi_j=\varphi_j\xi_5=6\varphi_{j+1}+6\theta_l+6\psi_l,~ j-1=3l,$$
	$$\xi_5\varphi_j=\varphi_j\xi_5=6\varphi_{j-1}+6\theta_l+6\psi_l,~ j+1=3l;$$
	
	$$\theta_k\theta_l=\theta_l\theta_k=\theta_{k+l}+\theta_{k-l},~k\neq l,$$
	$$\theta_k\theta_k=3\theta_{2k}+6\xi_0+6\xi_2,$$
	$$\theta_k\psi_l=\psi_l\theta_k=\psi_{k+l}+\psi_{k-l},~k\neq l,$$
	$$\theta_k\psi_k=\psi_k\theta_k=3\psi_{2k}+6\xi_1+6\xi_3,$$
	$$\theta_k\varphi_j=\varphi_j\theta_k=3\varphi_{3k+j}+3\varphi_{3k-j};$$
	
	$$\psi_k\psi_l=\psi_l\psi_k=2\theta_{k+l}+2\theta_{k-l}+\psi_{k+l}+\psi_{k-l},~k\neq l,$$
	$$\psi_k\psi_k=3\psi_{2k}+6\theta_{2k}+6\xi_1+12\xi_2+6\xi_3+12\xi_0,$$
	$$\psi_k\varphi_j=\varphi_j\psi_k=6\varphi_{3k+j}+6\varphi_{3k-j};$$
	
	$$\varphi_i\varphi_j=\varphi_j\varphi_i=9\varphi_{i+j}+9\theta_k+9\psi_k,~i-j=3k,$$
	$$\varphi_i\varphi_j=\varphi_j\varphi_i=9\varphi_{i-j}+9\theta_k+9\psi_k,~i+j=3k,$$
	$$\varphi_j\varphi_j=9\varphi_{2j}+18\xi_0+18\xi_1+18\xi_2+18\xi_3.$$
	
Let us, for example, check that $\varphi_j\varphi_j=9\varphi_{2j}+18\xi_0+18\xi_1+18\xi_2+18\xi_3$. Since $G^{'}\leq H$, the subgroup $H$ is normal in $G$ and $gH=Hg$ for every $g\in G$. So $\varphi_j\varphi_j=(a^j\underline{H}+a^{-j}\underline{H})^2=a^{2j}(\underline{H})^2+a^{-2j}(\underline{H})^2+2(\underline{H})^2=9a^{2j}\underline{H}+9a^{-2j}\underline{H}+18\underline{H}=9\varphi_{2j}+18\xi_0+18\xi_1+18\xi_2+18\xi_3.$

\end{proof}

\begin{prop}\label{M3n}
The $S$-ring  $\mathcal{A}$ is not schurian.
\end{prop}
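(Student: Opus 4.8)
The plan is to follow the strategy of Proposition~\ref{M27}. Assume for contradiction that $\mathcal{A}$ is schurian, so that by Lemma~\ref{schuriancond} we may write $\mathcal{A}=\mathcal{A}(KG_{right},G)$ with $K=\operatorname{Aut}(\mathcal{A})$ and with the sets $Z_i,X_k,Y_k,T_j$ being exactly the orbits of $K$ on $G$. The first step is to record the $\mathcal{A}$-subgroups that will supply the block systems. Since $\underline{H}=\xi_0+\xi_1+\xi_2+\xi_3$, $\underline{C}=\xi_0+\xi_2$ and $\underline{B}=\xi_0+\xi_1$ all lie in $\mathcal{A}$, the subgroups $H=C\times B$, $C=\langle c\rangle$ and $B=\langle b\rangle$ are $\mathcal{A}$-subgroups; by Lemma~\ref{block} their left cosets give three nested block systems for $K$. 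In particular every $\kappa\in K$ fixes the block $H$, permutes the cosets $a^{m}H$ among themselves, and respects the finer partitions into cosets of $C$ and of $B$.

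To organise the bookkeeping I would coordinatise each coset. Because $G'=C\leq H$, we have $H\trianglelefteq G$ and $G/H=\langle aH\rangle$ is cyclic of order $3^{n-2}$, so every element of $a^{m}H$ is uniquely $a^{m}c^{i}b^{j}$; write it as $[i,j]_{m}$. Then the coset of $C$ through $[i,j]_m$ is the ``column'' $\{[\ast,j]_m\}$ and the coset of $B$ is the ``row'' $\{[i,\ast]_m\}$, so $K$ acts on each coset by an affine map preserving rows and columns. A short computation with the commutation rule $b a^{m}=a^{m}b c^{-m}$ shows that $R(Z_2)$ joins points of a common column, that $R(Z_1)$ joins $[i,j]_m$ to $[i-m,j+1]_m$ and $[i+m,j-1]_m$ (the ``lines'' $i+mj=\mathrm{const}$), and, crucially, that the orbit $Z_4$ meets $aH$ in the main diagonal $\{[i,i]_1\}=aP$ but meets $a^{-1}H$ in the anti-diagonal $\{[i,j]_{-1}:i+j\equiv0\}=a^{-1}Q$, where $P=\langle cb\rangle$ and $Q=\langle c^{2}b\rangle$. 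Neither $P$ nor $Q$ is an $\mathcal{A}$-subgroup, and the essential point is that $Z_4$ attaches the diagonal $P$ to $aH$ yet the opposite diagonal $Q$ to $a^{-1}H$.

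The contradiction should now be produced as in Proposition~\ref{M27}. Since $Z_4$ is a single $K$-orbit, there is $\alpha\in K$ carrying a chosen point of $aP$ to a chosen point of $a^{-1}Q$; this $\alpha$ plays the role of the map $b\mapsto a^{2}$ used there. As an automorphism of $\mathcal{C}(\mathcal{A})$ it sends the $R(Z_1)$- and $R(Z_2)$-neighbourhoods of the source point onto those of the image, and matching neighbours according to the basic set ($Z_1,Z_2$ or $Z_3$) to which each belongs pins $\alpha$ down on several explicit points of $aH$ and $a^{-1}H$, exactly as equations~(1)--(2) did before. Because the diagonal/anti-diagonal discrepancy above is incompatible with the rigid column structure---enforced inside each coset by $R(Z_2)$ and, across cosets, by the relations $R(X_k)$ that join a column of $a^{m}H$ to the same column of $a^{m\pm3k}H$ (reflecting $\xi_2\theta_k=2\theta_k$)---tracking $\alpha$ through these identifications should exhibit two points of one coset of $C$ whose $\alpha$-images lie in two distinct cosets of $C$. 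This contradicts the fact that $\alpha$ preserves the block system of $C$-cosets, so no such $\alpha$ exists and $Z_4$ cannot be a single $K$-orbit.

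I expect the decisive difficulty to be the middle computation: one must choose the two points of $Z_4$ and the auxiliary neighbours so that the basic-set memberships determine $\alpha$ unambiguously, and then isolate the precise pair of elements of a common coset of $C$ (or of $H$) whose images split. Getting the indices to interact correctly will also require care about the range of $m$, which is where the hypothesis $n\ge4$ (equivalently $|G/H|=3^{n-2}\ge9$) enters; the degenerate small-index behaviour is exactly what forces the rank-$4$ group $M_{27}$ to be treated separately.
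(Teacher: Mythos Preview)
Your plan diverges from the paper's and, as stated, does not go through. You aim to pick $\alpha\in K$ carrying a point of $Z_4\cap aH=aP$ to a point of $Z_4\cap a^{-1}H=a^{-1}Q$ and then exhibit two points of a single $C$-coset whose $\alpha$-images land in different $C$-cosets, thereby showing $Z_4$ cannot be a $K$-orbit. But this local contradiction is not there. In your own coordinates, the three parallel classes in $aH$ that must be respected are the $C$-cosets $j=\text{const}$, the $B$-cosets $i+j=\text{const}$ (note: the $B$-coset through $[i,j]_m$ is $\{[i-mk,\,j+k]_m:k\}$, i.e.\ the line $i+mj=\text{const}$, not the ``row'' $\{[i,\ast]_m\}$ as you wrote), and the $Z_4$-class $i-j=0$; in $a^{-1}H$ the corresponding classes are $j=\text{const}$, $i-j=\text{const}$, and $i+j=0$. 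The affine map $[i,j]_1\mapsto[i,-j]_{-1}$ sends columns to columns, $B$-lines to $B$-lines, and the diagonal to the anti-diagonal, so no block violation of the kind you seek arises from the structure you have isolated. In fact $|Z_4|=6$ is perfectly compatible with a transitive action of a group of order~$18$, so the contradiction cannot live on $Z_4$ at all; it lives on $Z_5$ and the $Y_k$, whose size~$12$ is the genuine obstruction.

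The paper's argument is global rather than local. One shows that any $\alpha\in K$ fixing $a^{2}$ must fix $a$ and $a^{3}$ (by inspecting the $R(Z_4)$-neighbours of $a^{2}$), and then, by an induction along the chain $a,a^{2},a^{3},\dots$ using the $R(Z_4)$- and $R(X_1)$-neighbourhoods simultaneously, that $\alpha$ fixes every $a^{i}$. A further pass through $R(Z_4)$ and $R(X_k)$ forces $\alpha$ to fix every $a^{i}b^{j}$ as well, so $K_{a^{2}}=1$. If $\mathcal{A}$ were schurian, $T_2$ would be a $K$-orbit of size~$18$, whence $|K|=|K_{a^{2}}|\cdot|T_2|=18$; but then $|Z_5|=|Y_k|=12$ cannot be orbit lengths, the desired contradiction. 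Your $M_{27}$-style local matching is not enough here; the inductive ``walk'' along $\langle a\rangle$ is the missing idea, and the hypothesis $n\ge4$ enters because the $X_k$'s (needed for the induction step) exist only when $3^{n-3}\ge3$.
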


\begin{proof}
Assume the contrary. Then it follows from Lemma~\ref{schuriancond}  that $~\mathcal{A}=\mathcal{A}(KG_{right},G)$ where $K =\operatorname{Aut}(\mathcal{A})$ and the sets  $Z_i,~X_k,~Y_k,~T_j$ are the orbits of $K$.

\begin{lemma}\label{KBlocks}
 The groups $C$, $B$, $H$ are $\mathcal{A}$-subgroups. The left cosets of $C$, $B$, $H$ are the blocks of $K$.
	\end{lemma}
\begin{proof}
Note that $B=Z_0 \cup Z_1,~ C=Z_0 \cup Z_2,~ H=Z_0 \cup Z_1\cup Z_2\cup Z_3$. Then $\underline{C},~\underline{B},~\underline{H}\in \mathcal{A}$. Therefore $C$, $B$, $H$ are $\mathcal{A}$-subgroups and it follows from Lemma~\ref{block}  that the left cosets of $C$, $B$, $H$ are the blocks of~$K$.
\end{proof}

Since $H$ is  a normal $\mathcal{A}$-subgroup, following Lemma~\ref{ssection} we can form an  $S$-ring over $G/H$
$$\mathcal{A}_{G/H}=Span_{\mathbb{Z}}\left\{\underline{X}^{\pi}:~X\in\mathcal{S}\left(\mathcal{A}\right)\right\},$$
where $\pi:G\rightarrow G/H$ is a canonical homomorphism.

\begin{lemma}\label{2factor}
 The automorphism group of the $S$-ring $\mathcal{A}_{G/H}$ is of order $2$. Its non-identity element permutes $a^iH$ and $a^{-i}H$.
	\end{lemma}

\begin{proof}
The basic sets of $\mathcal{A}_{G/H}$ are of the form
$$\left\{H\right\}, \left\{a^iH, a^{-i}H\right\}, i=1,\ldots,\frac{3^{n-2}-1}{2}.$$
Let $\mathcal{C}(\mathcal{A}_{G/H})$ be the Cayley scheme corresponding to $\mathcal{A}_{G/H}$. The basic relation of $\mathcal{C}(\mathcal{A}_{G/H})$ corresponding to the  basic set $\left\{aH, a^{-1}H\right\}$ is a cycle of length $3^{n-2}$. The conclusion of the lemma is a direct consequence of  the fact that the automorphism group of the graph of undirected cycle is dihedral, and the one point stabilizer of it is of order $2$.
\end{proof}

Consider the action of the elements from $K$ on the set $Z_3$. The automorphisms of $\mathcal{A}$ do not contain in their cyclic structure cycles of length $3$ and  $4$ consisting of the elements from $Z_3$ because by Lemma~\ref{KBlocks} the left cosets $cB,~c^2B$ and $bC,~b^2C$ are the blocks of $K$. Since $Z_3$ is an orbit of $K$, it follows that $K$ acts on $Z_3$ as the Klein four-group $K_4$. Since $Z_1$ and $Z_2$ are $K$-orbits of length $2$, each permutation of $K$ can be written as follows:

$$ \gamma,\eqno(1) $$
$$ (b,~b^2) (cb,~cb^2)(c^2b,~c^2b^2)\gamma, \eqno(2)$$
$$ (c,~c^2) (cb,~c^2b)(cb^2,~c^2b^2)\gamma,\eqno(3)$$
$$ (c,~c^2)(b,~b^2) (cb,~c^2b^2)(cb^2,~c^2b)\gamma ,\eqno(4)$$
where  $\gamma\in K$ acting on $H$ trivially.

Let $\mathcal{C}(\mathcal{A})$ be the Cayley scheme corresponding to $\mathcal{A}$. Below we list  the elements adjacent to  $e,~b,~b^2,~c,~c^2$ in $R(Z_4)$:

$$e:a,~ cab,~c^2ab^2,~a^{-1},~ ca^{-1}b^2,~ c^2a^{-1}b;$$
$$b:ab,~ cab^2,~c^2a,~a^{-1}b,~ ca^{-1},~ c^2a^{-1}b^2;$$
$$b^2:ab^2,~ ca,~c^2ab,~a^{-1}b^2,~ ca^{-1}b,~ c^2a^{-1};$$
$$c:ab^2,~ ca,~c^2ab,~a^{-1}b,~ ca^{-1},~ c^2a^{-1}b^2;$$
$$c^2:ab,~ cab^2,~c^2a,~a^{-1}b^2,~ ca^{-1}b,~ c^2a^{-1}.$$

Denote the set of all elements adjacent to $q\in G$ in $R(Z_4)$  by $L_q$. Let $\alpha\in K$ is of type (4). Then $b^{\alpha}=b^2$ and $(c^2)^{\alpha}=c$. The element $ab$ is adjacent to $b$ and $c^2$ in $R(Z_4)$. Therefore $(ab)^{\alpha}$ is adjacent to $b^2$ and $c$ in $R(Z_4)$. We conclude that $(ab)^{\alpha}\in L_{b^2}\cap L_c=\{ab^2,~ca,~c^2ab\}$. Similarly an element of type (2) takes $ab$ to one of the elements
$ca^{-1}b,~a^{-1}b^2,~c^2a^{-1}$ and an element of type (3) takes $ab$ to one of the elements $a^{-1}b,~ca^{-1},~c^2a^{-1}b^2.$ This means that $(ab)^{\alpha} \in aH\cap (aH)^{\alpha}$ for $\alpha\in K$ of types (1) and (4), $(ab)^{\alpha} \in a^{-1}H\cap (aH)^{\alpha}$ for $\alpha\in K$ of types (2) and (3). The left cosets of $H$ are the blocks of $K$, so $aH =(aH)^{\alpha}$ for every $\alpha\in K$ of types (1) and (4),  $a^{-1}H=(aH)^{\alpha}$ for every $\alpha\in K$ of types (2) and (3). Applying Lemma \ref{2factor}, we conclude that   elements of types (1) and (4) fix all left cosets of $H$ while  elements of types  (2) and (3) interchange cosets  $a^iH$ and $a^{-i}H$.

Let $\alpha \in K$ be an automorphism of $\mathcal{A}$ that fixes $a^2 \in T_2$. The elements
$$a^3,~a,~ a^{3^{n-2}+3}b^2,~a^{2\cdot3^{n-2}+3}b,~ ab^2,~ ab$$
are adjacent to  $a^2$ in  $R(Z_4)$. The element $a$ is the only one of them from $Z_4$, the element $a^3$ is the only one of them from $X_1$. Therefore $a$ and $a^3$  are fixed by $\alpha$.

Further, we prove that $\alpha$ fixes $a^i$ for every $i=1,\ldots,3^{n-1}-1$. We proceed by induction on $i$. Suppose that $\alpha$ fixes $a^j, j\leq i,$. The elements
$$a^{i+1},~a^{i-1},~ a^{k_1}b^{l_1},~a^{k_2}b^{l_2},~a^{k_3}b^{l_3},~a^{k_4}b^{l_4},~ l_m\neq 0,~ m = 1,\ldots,4$$
are adjacent to $a^i$ in  $R(Z_4)$. They are permuted by $\alpha$ because  $\alpha$ fixes $a^i$.
The elements
$$a^{i+1},~a^{i+1+3^{n-2}},~ a^{i+1+2\cdot3^{n-2}},~a^{i-5+3^{n-2}},~a^{i-5+2\cdot3^{n-2}},~ a^{i-5}$$ are adjacent to $a^{i-2}$ in  $R(X_1)$. They are also permuted by $\alpha$ because $\alpha$ fixes $a^{i-2}$. However, $\alpha$ can not take $a^{i+1}$ into $a^{i-1}$ under action of $\alpha$ because $\alpha$ fixes $a^{i-1}$. Therefore

$${(a^{i+1})}^{\alpha} \in C \cap D,$$ where
$$C=\{a^{i+1},~ a^{k_1}b^{l_1},~a^{k_2}b^{l_2},~a^{k_3}b^{l_3},~a^{k_4}b^{l_4},~ l_m\neq 0,~ m = 1,\ldots,4\},$$
$$D=\{a^{i+1},~a^{i+1+3^{n-2}},~ a^{i+1+2\cdot3^{n-2}},~a^{i-5+3^{n-2}},~a^{i-5+2\cdot3^{n-2}},~ a^{i-5}\}.$$
Since $|C\cap D|=1$, we have ${(a^{i+1})}^{\alpha} = a^{i+1}$.

The element $\alpha$ can act non-trivially only by permuting  $a^ib$ and $a^ib^2$ for some $i$ because the left cosets of $B$ are the blocks of $K$. Therefore $\alpha$ fixes each left coset of $H$ as a set  (they are the blocks of $K$) and $\alpha$ is not of type (4) because it fixes the elements  $a^{3^{n-2}}$ and $a^{2\cdot3^{n-2}}$. Therefore $\alpha$ is of type (1). Thus $\alpha$ acts on $H$ trivially, in particular,  it fixes the elements $b$ and $b^2$. Note that $\alpha$  acts trivially on $Z_4$ and  $X_k$.

 The element $\alpha$ can act non-trivially on $Z_5$ only by permuting  $ab$ and $ab^2$ or $a^{-1}b$ and $a^{-1}b^2$. However, $ab$ and $a^{-1}b$ are adjacent to  $b$ in $R(Z_4)$ when the elements  $ab^2$ and $a^{-1}b^2$ are not adjacent to~$b$ in $R(Z_4)$. Therefore $\alpha$ acts trivially on $Z_5$. The elements of the form $a^{3l}b$ from $Y_k$ are adjacent to $b$ in  $R(X_k)$, the elements of the form $a^{3l}b^2$ from $Y_k$ are not adjacent to  $b$ in $R(X_k)$. Therefore $\alpha$ acts trivially on~$Y_k$.

Two elements from $T_i$, which can be permuted by $\alpha$, must be of the form $a^ib$ and $a^ib^2$, where $i$ is not a multiple of 3. If $i=3k+1$, then the element $a^ib$ is adjacent in $R(Z_4)$  to the element $a^{3k}b$, which is fixed by $\alpha$, the element $a^ib^2$ is not adjacent to the element $a^{3k}b$ in $R(Z_4)$. The case when $i=3k-1$ is similar. This means that $\alpha$ acts on $T_i$  trivially.

Thus, $\alpha = 1$ and the stabilizer of $a^2$ in $K$ is trivial. Then $\left|K\right| = \left|K_{a^2}\right|\cdot\left|a^2K\right| = 18$. We have a contradiction because the lengths of the orbits $Z_5$ and  $Y_k$ of~$K$ equal to 12 do not divide the order of $K$ equal to 18.

\end{proof}

The main theorem follows from Proposition  \ref{M27} and Proposition \ref{M3n}.

\bigskip

\end{document}